      \theoremstyle{plain}
      \newtheorem{theorem}{Theorem}[section]
      \newtheorem{lemma}[theorem]{Lemma}
      \newtheorem{corollary}[theorem]{Corollary}
      \theoremstyle{definition}
      \theoremstyle{remark}
\def\twr{\mbox{\rm twr}}
\def\sgn{\mbox{\rm sgn}}
\def\ot{\mbox{\rm OT}}
\title{A note on order-type homogeneous point sets}
\author{ Andrew Suk\thanks{Massachusetts Institute of Technology, Cambridge, MA. Supported by an NSF Postdoctoral
Fellowship and by Swiss National Science Foundation Grant 200021-125287/1. Email: {\tt asuk@math.mit.edu}.}}
\begin{document}
\maketitle

\medskip

\begin{abstract}

Let $\ot_d(n)$ be the smallest integer $N$ such that every $N$-element point sequence in $\mathbb{R}^d$ in general position contains an order-type homogeneous subset of size $n$, where a set is \emph{order-type homogeneous} if all $(d+1)$-tuples from this set have the same orientation.  It is known that a point sequence in $\mathbb{R}^d$ that is order-type homogeneous, forms the vertex set of a convex polytope that is combinatorially equivalent to a cyclic polytope in $\mathbb{R}^d$.  Two famous theorems of Erd\H os and Szekeres from 1935 imply that $\ot_{1}(n) = \Theta(n^2)$ and $\ot_2(n) = 2^{\Theta(n)}$.  For $d\geq 3$, we give new bounds for $\ot_d(n)$.  In particular:

\begin{itemize}
\item We show that $\ot_3(n) = 2^{2^{\Theta(n)}}$, answering a question of Eli\'a\v{s} and Matou\v{s}ek.

\item For $d\geq 4$, we show that $\ot_d(n)$ is bounded above by an exponential tower of height $d$ with $O(n)$ in the topmost exponent.

\end{itemize}

\end{abstract}

\section{Introduction}

In their classic paper \cite{es}, Erd\H os and Szekeres proved the following two well-known results.

 \begin{theorem}
 \label{es1}
 For $N = (n-1)^2 + 1$, let $P = (p_1,...,p_N) \subset \mathbb{R}$ be a sequence of $N$ distinct real numbers.  Then $P$ contains a subsequence $(p_{i_1},...,p_{i_n})$, $i_1 < \cdots < i_n$, such that either $p_{i_1} < p_{i_2} < \cdots < p_{i_n}$ or $p_{i_1} > p_{i_2} > \cdots > p_{i_n}$.

 \end{theorem}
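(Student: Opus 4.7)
The plan is to give the classical pigeonhole argument due to Seidenberg. To each index $i \in \{1,\dots,N\}$ I would associate a pair of statistics $(a_i,b_i)$, where $a_i$ is the length of the longest strictly increasing subsequence of $P$ ending at $p_i$, and $b_i$ is the length of the longest strictly decreasing subsequence ending at $p_i$. The goal is to show that the map $i \mapsto (a_i,b_i)$ is injective, and then to conclude by a counting argument.

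To establish injectivity, I would take any two indices $i<j$ and split into the two cases $p_i<p_j$ and $p_i>p_j$. In the first case, appending $p_j$ to a longest increasing subsequence ending at $p_i$ yields an increasing subsequence ending at $p_j$ of length $a_i+1$, so $a_j \geq a_i+1 > a_i$; in the second case, the analogous argument gives $b_j > b_i$. Either way $(a_i,b_i) \ne (a_j,b_j)$.

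For the conclusion, suppose for contradiction that $P$ contains no monotone subsequence of length $n$. Then $a_i, b_i \in \{1,2,\dots,n-1\}$ for every $i$, so the pairs $(a_i,b_i)$ lie in a set of size $(n-1)^2$. By injectivity, $N \leq (n-1)^2$, contradicting $N=(n-1)^2+1$.

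The argument is essentially routine once the right statistics are chosen; there is no real obstacle. The only subtlety worth double-checking is that the sequence is assumed to consist of \emph{distinct} reals, so the dichotomy $p_i<p_j$ versus $p_i>p_j$ is exhaustive and each of $a_i,b_i$ is well defined as a strict monotone length.
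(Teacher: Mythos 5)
Your proof is correct: the Seidenberg pigeonhole argument (assigning to each index the pair of longest increasing/decreasing subsequence lengths ending there, proving injectivity via the distinctness of the reals, and counting) is complete and establishes exactly the bound $N=(n-1)^2+1$ claimed in the statement. The paper itself gives no proof of this theorem --- it is quoted as the classical Erd\H{o}s--Szekeres monotone subsequence result, with a pointer to the survey \cite{steele} of its many proofs --- so there is nothing to compare against; your argument is one of the standard proofs and would serve perfectly well here.
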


\noindent In fact, there are now at least 6 different proofs of Theorem \ref{es1} (see \cite{steele}).  Notice that the point sequence $(p_{i_1},...,p_{i_n})$ obtained from Theorem \ref{es1} has the property that either $p_{i_k} - p_{i_j} > 0$ for every pair $j,k$ such that $1 \leq j < k\leq n$, or $p_{i_k} - p_{i_j} < 0$ for every pair $j,k$ such that $1 \leq j < k\leq n$.  The other well-known result from \cite{es} is the following theorem, which is often referred to as the Erd\H os-Szekeres cups-caps Theorem (see also \cite{mosh}).

\begin{theorem}
\label{es2}
For $N =  {2n - 4\choose n-2} + 1$, let $P = (p_1,...,p_N)$ be a sequence of $N$ points in the plane such that no 2 share a common first coordinate, $P$ is ordered by increasing first coordinate, and no 3 points lie on a line.  Then $P$ contains a subsequence $(p_{i_1},...,p_{i_n})$, $i_1 < \cdots < i_n$, such that the slopes of the lines $p_{i_j}p_{i_{j+1}}$, $j = 1,2,...,n-1$, are increasing or decreasing.
\end{theorem}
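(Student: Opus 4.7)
The plan is to prove Theorem \ref{es2} via the classical cups-and-caps induction of Erd\H{o}s and Szekeres. Call a subsequence $(p_{i_1},\ldots,p_{i_k})$ (ordered by first coordinate) a \emph{$k$-cup} if the slopes of the lines $p_{i_j}p_{i_{j+1}}$ are strictly increasing, and a \emph{$k$-cap} if they are strictly decreasing. Let $f(k,\ell)$ denote the least $N$ such that every $N$-point set in general position (no two sharing a first coordinate, no three collinear), ordered by first coordinate, contains a $k$-cup or an $\ell$-cap. The theorem will follow once we show
\[
  f(k,\ell) \le \binom{k+\ell-4}{k-2}+1,
\]
since then $f(n,n) \le \binom{2n-4}{n-2}+1$, and any monotone-slope subsequence of length $n$ is exactly an $n$-cup or $n$-cap.

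First I would handle the base cases $f(2,\ell)=f(k,2)=2$ trivially, since any two points form both a $2$-cup and a $2$-cap. The heart of the argument is the recursion
\[
  f(k,\ell) \le f(k-1,\ell) + f(k,\ell-1) - 1,
\]
and an easy induction on $k+\ell$ with Pascal's identity then yields the binomial bound. So the main task reduces to proving the recursion.

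To prove the recursion, set $N = f(k-1,\ell)+f(k,\ell-1)-1$ and take a point set $P$ of size $N$. Assume for contradiction that $P$ contains no $k$-cup and no $\ell$-cap. Let $T \subseteq P$ be the set of points that occur as the \emph{right endpoint} of some $(k-1)$-cup in $P$. Any point of $P \setminus T$ lies in a set with no $(k-1)$-cup (else a witnessing right endpoint would be in $T$) and no $\ell$-cap, so by the definition of $f$ we must have $|P \setminus T| \le f(k-1,\ell)-1$, giving $|T| \ge f(k,\ell-1)$. Hence $T$ itself contains a $k$-cup (contradiction) or an $(\ell-1)$-cap $q_1,\ldots,q_{\ell-1}$. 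Since $q_1 \in T$, there is a $(k-1)$-cup ending at $q_1$; call its second-to-last point $q_0$.

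The crux, and the only place that requires care, is the case split on the two slopes meeting at $q_1$. If $\mathrm{slope}(q_0q_1) < \mathrm{slope}(q_1q_2)$, then appending $q_2$ to the $(k-1)$-cup produces a $k$-cup, contradicting our assumption. Otherwise $\mathrm{slope}(q_0q_1) > \mathrm{slope}(q_1q_2)$, and prepending $q_0$ to the $(\ell-1)$-cap $q_1,\ldots,q_{\ell-1}$ yields an $\ell$-cap, again a contradiction. Either way we reach a contradiction, establishing the recursion and hence the theorem. The only conceptual obstacle is the clever definition of $T$ that makes both branches of the case split productive; the rest is bookkeeping with a binomial induction.
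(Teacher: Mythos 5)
Your proof is correct: it is the classical Erd\H{o}s--Szekeres cups--caps induction, with the recursion $f(k,\ell)\le f(k-1,\ell)+f(k,\ell-1)-1$ proved via the set of right endpoints of $(k-1)$-cups and the two-slope case split, which is exactly the argument behind the result the paper cites (the paper itself states Theorem \ref{es2} without proof, referring to \cite{es}). The base cases, the Pascal-identity induction, and the use of general position to get strict slope comparisons are all handled properly, so nothing is missing.
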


\noindent See Figure \ref{cupcapex}.  Again, notice that the point sequence $(p_{i_1},...,p_{i_n})$ obtained from Theorem \ref{es2} has the property that either every triple has a clockwise orientation, or every triple has a counterclockwise orientation.

\begin{figure}[h]
\begin{center}
\includegraphics[width=280pt]{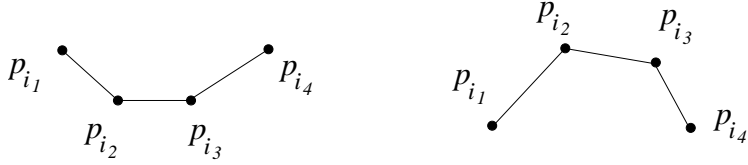}
  \caption{Points $p_{i_1},p_{i_2},p_{i_3},p_{i_4}$ obtained from Theorem \ref{es2}.}\label{cupcapex}
 \end{center}
\end{figure}

The preceding discussion generalizes in a natural way to point sequences in $\mathbb{R}^d$ in general position.  A point set $P$ in $\mathbb{R}^d$ is in \emph{general position}, if no $d+1$ members lie on a common hyperplane, and no 2 members share the same $i$-th coordinate for $1\leq i \leq d$.

Let $P = (p_1,...,p_N)$ be an $N$-element point sequence in $\mathbb{R}^d$ in general position.  For $i_1 < i_2 < \cdots < i_{d+1}$, the \emph{orientation} of the $(d+1)$-tuple $(p_{i_1},p_{i_2},...,p_{i_{d+1}}) \subset P$ is defined as the sign of the determinant of the unique linear mapping $A$ that sends the $d$ vectors $p_{i_2}-p_{i_1},p_{i_3}-p_{i_1},...,p_{i_{d+1}} - p_{i_1}$, to the standard basis $e_1,e_2,...,e_{d}$.  Geometrically, if $h\subset \mathbb{R}^d$ is the hyperplane spanned by $p_{i_1},...,p_{i_d}$, then the orientation of the $(d+1)$-tuple $(p_{i_1},p_{i_2},...,p_{i_{d + 1}})$ tells us on which side of the hyperplane $h$ the point $p_{i_{d+1}}$ lies.

The \emph{order type} of $P= (p_1,p_2,...,p_N)$ is the mapping $\chi:{P \choose d + 1}\rightarrow \{+1,-1\}$ (positive orientation, negative orientation), assigning each $(d+1)$-tuple of $P$ its orientation.  Hence for $p_i = (a_{i,1},a_{i,2},...,a_{i,d}) \in \mathbb{R}^d$,

$$\chi(\{p_{i_1},p_{i_2},...,p_{i_{d + 1}}\})  =  \sgn\det\left(\begin{array}{cccc}
                                                           1 & 1 & \cdots & 1 \\
                                                           a_{i_1,1} & a_{i_2,1} & \cdots & a_{i_{d+1},1} \\
                                                           \vdots & \vdots & \vdots & \vdots \\
                                                           a_{i_1,d} & a_{i_2,1} & \cdots & a_{i_{d+1},1}
                                                         \end{array}\right).$$

\noindent Therefore, two $N$-element point sequences $P$ and $Q$ have the same order type if they are ``combinatorially equivalent."  We say that a point sequence in $\mathbb{R}^d$ is \emph{order-type homogeneous}, if all $(d+1)$-tuples have the same orientation.  See \cite{mat} and \cite{gp} for more background on order types.

Order-type homogeneous point sets exhibit several fascinating combinatorial and algebraic properties.  Recall that an $n$-vertex \emph{cyclic polytope} in $\mathbb{R}^d$ is the convex hull of $n$ points on the moment curve $\gamma = \{(t,t^2,...,t^d): t \in \mathbb{R}\}\subset \mathbb{R}^d$.  A well-known folklore states that a point sequence $P$ in $\mathbb{R}^d$ that is order-type homogeneous forms the vertex set of a convex polytope which is combinatorially equivalent to the cyclic polytope in $\mathbb{R}^d$ (see \cite{mat} Exercise 5.4.3 and \cite{duchet}).  A classic result of McMullen \cite{mc} states that among all $d$-dimensional convex polytopes with $n$ vertices, the cyclic polytope maximizes the number of faces of each dimension.

Following the notation of Eli\'a\v{s} and Matou\v{s}ek \cite{matousek} we define $\ot_d(n)$ to be the smallest integer $N$ such that any $N$-element point sequence in $\mathbb{R}^d$ in general position, contains an $n$-element subsequence that is order-type homogeneous.  For dimension one, an order-type homogeneous sequence in $\mathbb{R}$ is just an increasing or decreasing sequence of real numbers.  Hence, Theorem \ref{es1} implies that $\ot_1(n) \leq (n-1)^2 + 1$.  On the other hand, a simple construction from \cite{es} shows that $\ot_1(n) = (n-1)^2 + 1$.  For dimension two, an order-type homogeneous point sequence in $\mathbb{R}^2$ is a planar point sequence in convex position, which appear in either clockwise or counterclockwise order along the boundary of their convex hull (see \cite{hubard}).  By combining Theorems \ref{es1} and \ref{es2}, one can show\footnote{We write $f(n) = O(g(n))$ if $|f(n)| \leq c|g(n)|$ for some fixed constant $c$ and for all $n \geq 1$; $f(n) = \Omega(g(n))$ if $g(n) = O(f(n))$; and $f(n) = \Theta(g(n))$ if both $f(n) = O(g(n))$ and $f(n) = \Omega(g(n))$ hold.} that $\ot_2(d) \leq 2^{O(n)}$.  On the other hand, a famous construction of Erd\H os and Szekeres \cite{es22} on point sets in the plane with no large convex subset, shows that $\ot_2(n) = 2^{\Theta(n)}$.

For several decades, the best known upper bound on $\ot_d(n)$ for fixed $d\geq 3$ was obtained by applying Ramsey numbers\footnote{The \emph{Ramsey number} $R_k(n)$ is the least integer $N$ such that every red-blue coloring of all unordered $k$-tuples of an $N$-element set contains either a red set of size $n$ or a blue set of size $n$, where a set is called red (blue) if all $k$-tuples from this set are red (blue).} (see \cite{matousek,grunbaum,duchet} and \cite{graham,erdos2,erdos3,rado}).  This implies

  $$\ot_d(n) \leq \twr_{d+1}(O(n)),$$

  \noindent where the tower function $\twr_k(x)$ is defined by $\twr_1(x) = x$ and $\twr_{i + 1} = 2^{\twr_i(x)}$.  Recently, Conlon et al. \cite{suk} improved this upper bound to $\twr_{d}(n^{c_d})$, where $c_d$ is exponential in a power of $d$.    Our main result establishes a further improvement on the upper bound of $\ot_d(n)$.

\begin{theorem}
\label{main1}
For fixed $d\geq 2$, we have $\ot_d(n) \leq \twr_{d}(O(n)).$

\end{theorem}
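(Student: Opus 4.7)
The plan is to prove Theorem~\ref{main1} by induction on $d$. The base case $d = 2$ follows directly from the cups-caps theorem (Theorem~\ref{es2}): a sequence of $\binom{2n-4}{n-2}+1 = 2^{O(n)}$ points in the plane in general position must contain an $n$-cup or an $n$-cap, each of which is order-type homogeneous, so $\ot_2(n) \leq 2^{O(n)} = \twr_2(O(n))$.

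For the inductive step from $d-1$ to $d$, I would assume $\ot_{d-1}(m) \leq \twr_{d-1}(Km)$ for some constant $K = K(d-1)$, take a sequence $P = (p_1,\ldots,p_N) \subset \mathbb{R}^d$ in general position sorted by first coordinate, and let $\pi \colon \mathbb{R}^d \to \mathbb{R}^{d-1}$ denote the projection dropping the first coordinate. Then $\pi(P)$ is an ordered sequence of $N$ points in $\mathbb{R}^{d-1}$ in general position, and applying the inductive hypothesis extracts a subsequence $P' \subseteq P$ of size $m$ whose $\pi$-image is order-type homogeneous with some fixed sign $\varepsilon \in \{\pm 1\}$, provided $N \geq \twr_{d-1}(Km)$.

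The remaining \emph{lifting} step is to show that such a $P'$ contains an order-type homogeneous subsequence in $\mathbb{R}^d$ of size $n$ starting from only $m = 2^{O(n)}$ points. The key algebraic input is cofactor expansion of the $(d+1) \times (d+1)$ orientation determinant along the row of first coordinates, giving
\[
\sgn\!\left(\sum_{j=1}^{d+1} (-1)^{j+1} a_{i_j,1}\, M_j\right),
\]
where each minor $M_j$ is, up to a fixed sign, the orientation of the $d$-tuple of projections obtained by deleting $\pi(p_{i_j})$, and therefore has sign controlled by $\varepsilon$ by the projection-homogeneity of $\pi(P')$. An Erd\H os--Szekeres-type argument applied to the resulting alternating-sum condition in the real-valued sequence of first coordinates then produces the desired order-type homogeneous subsequence of size $n$ from $2^{O(n)}$ points, and combining the two bounds gives $\ot_d(n) \leq \twr_{d-1}(K \cdot 2^{O(n)}) = \twr_d(O(n))$.

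The principal obstacle I anticipate is the lifting lemma itself. Although the cofactor expansion cleanly isolates the role of the sign $\varepsilon$, the resulting sign condition is an alternating sum of first coordinates weighted by the magnitudes $|M_j|$, which themselves vary with the chosen tuple. Reducing this alternating-sum condition to an ordinary cup-cap pattern on the sequence of first coordinates --- most likely by exploiting the cyclic-polytope structure forced on $\pi(P')$ so that the weights $|M_j|$ factor through a simpler invariant --- will require the bulk of the work. Once such a lifting lemma is in hand, the induction closes with only linear loss in the topmost exponent and yields the stated $\twr_d(O(n))$ bound.
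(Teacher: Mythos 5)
Your base case is fine, and the arithmetic $\twr_{d-1}(2^{O(n)})=\twr_d(O(n))$ is fine, but the inductive step rests on a lifting lemma that is not just difficult --- it is false for $d\geq 3$ at the quantitative level you need. You require that any $m=2^{O(n)}$ points of $\mathbb{R}^d$ whose coordinate projection to $\mathbb{R}^{d-1}$ is order-type homogeneous contain an order-type homogeneous subsequence of size $n$ in $\mathbb{R}^d$. Already for $d=3$ this fails: the lower-bound construction of Eli\'a\v{s} and Matou\v{s}ek \cite{matousek} places a sequence of $2^{2^{\Omega(n)}}$ points on a curve of the form $\{(f(t),t,t^2)\}$ (so that the orientation of a $4$-tuple is the sign of a third-order divided difference of $f$), and chooses $f$ so that no $n$-term subsequence has all these signs equal. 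The projection of this entire set obtained by dropping the first coordinate lies on the moment curve $(t,t^2)$ and is therefore order-type homogeneous, yet the set has no order-type homogeneous subsequence of size $n$ in $\mathbb{R}^3$. So the lifting cost $L(n)$ in your recursion $\ot_d(n)\leq \ot_{d-1}(L(n))$ satisfies $L(n)\geq 2^{2^{\Omega(n)}}$ for $d=3$ (and grows like a tower in general), and your scheme then yields a tower of height roughly $2(d-1)$ rather than $d$. The cofactor expansion itself is correct, but for $d\geq 3$ the sign of the weighted alternating sum genuinely depends on the magnitudes $|M_j|$, and this is exactly where the extra tower level hides; there is no reduction to a cup--cap pattern in the first coordinates. (A smaller issue: you cannot simply re-sort $P$ by first coordinate, since the homogeneous subsequence must respect the given sequence order; one must first pass to a subsequence monotone in the first coordinate via Theorem \ref{es1}, which is cheap but should be said.)

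The paper composes the recursion in the opposite direction, which is what makes the tower height come out right: it proves $\ot_d(n)\leq 2^{4d^2M\log M}$ with $M=\ot_{d-1}(n-1)$, so the new exponential sits \emph{outside} and the inner target stays $n-1$ instead of $2^{O(n)}$. Concretely, it builds (by a hyperplane-arrangement and pigeonhole argument) a sequence $q_1,\ldots,q_{2M}$ such that for every $d$-tuple among them, all later chosen points lie on one fixed side of the spanned hyperplane; it then centrally projects $M$ of these points from the last point $q_{2M}$ onto a horizontal hyperplane, applies the definition of $\ot_{d-1}(n-1)$ to the projected set, and the ``same-side'' property makes the lift back to $\mathbb{R}^d$ immediate (adding $q_{2M}$ costs one point, not an exponential). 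If you want to rescue an induction of your flavor, the lesson is that the dimension reduction must be performed through a carefully chosen point of the configuration rather than through a coordinate projection of an arbitrary configuration.
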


A recent result of Eli\'a\v{s} and Matou\v{s}ek \cite{matousek} shows that $\ot_3(n) \geq 2^{2^{\Omega(n)}}$.  Hence as an immediate corollary to Theorem \ref{main1}, we have obtained a reasonably tight bound on $\ot_3(n)$.

\begin{corollary}
\label{main2}
For dimension three, we have $\ot_3(n) = 2^{2^{\Theta(n)}}$.

\end{corollary}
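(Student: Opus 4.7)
The plan is to simply combine the upper bound from Theorem \ref{main1} with the matching lower bound of Eliáš and Matoušek already cited in the paragraph immediately preceding the corollary. There is no real obstacle here: the corollary is designed to be an immediate consequence of the main theorem, and the proof should be essentially one line.

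First I would specialize Theorem \ref{main1} to $d=3$, obtaining $\ot_3(n) \leq \twr_3(O(n))$. Unfolding the tower function using its recursive definition $\twr_1(x)=x$ and $\twr_{i+1}(x)=2^{\twr_i(x)}$ yields $\twr_3(O(n)) = 2^{2^{2^{O(n)}}}$; wait—let me recount. With $\twr_1(x)=x$, we have $\twr_2(x)=2^x$ and $\twr_3(x)=2^{2^x}$. Hence $\ot_3(n) \leq 2^{2^{O(n)}}$, which is exactly the upper half of the claimed bound.

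Second I would invoke the result of Eliáš and Matoušek \cite{matousek}, quoted in the sentence just before the corollary, which provides a construction of an $N$-point sequence in $\mathbb{R}^3$ in general position with no order-type homogeneous subsequence of size $n$, for $N$ as large as $2^{2^{\Omega(n)}}$. This gives the matching lower bound $\ot_3(n) \geq 2^{2^{\Omega(n)}}$.

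Finally I would combine the two inequalities to conclude $\ot_3(n) = 2^{2^{\Theta(n)}}$. The only thing worth flagging is that the $\Theta$ in the exponent is the composition of the $O$ coming out of Theorem \ref{main1} and the $\Omega$ coming from the Eliáš–Matoušek construction, with possibly different absolute constants; this is standard and requires no calculation beyond observing that both the upper and lower bounds take the form $2^{2^{cn}}$ for some constant $c>0$.
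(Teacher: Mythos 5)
Your proposal is correct and matches the paper's reasoning exactly: the corollary is obtained by specializing Theorem \ref{main1} to $d=3$, noting $\twr_3(O(n)) = 2^{2^{O(n)}}$, and combining this with the Eli\'a\v{s}--Matou\v{s}ek lower bound $\ot_3(n) \geq 2^{2^{\Omega(n)}}$. Nothing further is needed.
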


\section{Proof of Theorem \ref{main1}}

In this section we will prove Theorem \ref{main1}.  First we will introduce some notions.  For $p,q\in \mathbb{R}^d$ where $p = (a_1,...,a_d)$ and $q = (b_1,...,b_d)$, we say that $p$ lies \emph{above} $q$ if $a_d >  b_d$.  Likewise, we say that $p$ lies \emph{below} $q$ if $a_d < b_d$.

Recall the following lemma on arrangements of hyperplanes (see \cite{mat}).

 \begin{lemma}
For $m\geq d \geq 2$, the number of cells ($d$-faces) in an arrangement of $m$ hyperplanes in $\mathbb{R}^d$ is at most $m^d$.
\end{lemma}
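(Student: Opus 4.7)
The plan is to prove the bound by double induction, first on the dimension $d$ and then on the number of hyperplanes $m$, using the classical ``delete a hyperplane'' recursion. Let $g(m,d)$ denote the maximum number of cells in an arrangement of $m$ hyperplanes in $\mathbb{R}^d$. Given an arrangement $\{h_1,\ldots,h_m\}$, I would single out $h_m$ and compare with the subarrangement of $m-1$ hyperplanes. Adding $h_m$ back only splits those cells it crosses, and these are in bijection with the cells of the induced $(d-1)$-dimensional arrangement $\{h_m\cap h_i : i<m\}$ inside $h_m$ (if that induced arrangement is degenerate, a generic perturbation of the hyperplanes can only increase the total cell count, so we may assume general position). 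This yields the recursion
\[
g(m,d) \;\leq\; g(m-1,d) + g(m-1,d-1).
\]

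For the base case $d=2$, the standard line-arrangement count gives $g(m,2)\leq 1+m+\binom{m}{2}=(m^2+m+2)/2$, which is at most $m^2$ precisely when $m(m-1)\geq 2$, i.e.\ for every $m\geq 2$. For the inductive step with $d\geq 3$, assume $g(m',d')\leq (m')^{d'}$ for all $(m',d')$ preceding $(m,d)$ in the induction. Plugging into the recursion gives
\[
g(m,d) \;\leq\; (m-1)^d + (m-1)^{d-1} \;=\; (m-1)^{d-1}\bigl((m-1)+1\bigr) \;=\; m(m-1)^{d-1} \;\leq\; m^{d},
\]
which closes the induction. The inner base case $m=d$ is $g(d,d)\leq 2^d\leq d^d$, since each added hyperplane at most doubles the number of cells, and $2^d\leq d^d$ for all $d\geq 2$.

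There is essentially no obstacle in this argument; it is a routine computation. The only subtle step is the possibly non-generic induced arrangement on $h_m$, which I would dispense with by a perturbation argument. The real content of the calculation is the algebraic identity $(m-1)^d+(m-1)^{d-1}=m(m-1)^{d-1}$, which is what makes the inductive bound collapse cleanly to $m^d$ and explains why the slightly loose power $m^d$ (rather than the sharper $\sum_{i=0}^{d}\binom{m}{i}$) is the natural form for this lemma.
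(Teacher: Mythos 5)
Your argument is correct: the deletion recursion $g(m,d)\leq g(m-1,d)+g(m-1,d-1)$, the planar base case, and the inner base case $g(d,d)\leq 2^d\leq d^d$ together close the double induction, and the computation $(m-1)^d+(m-1)^{d-1}=m(m-1)^{d-1}\leq m^d$ is right. The paper itself gives no proof of this lemma (it simply cites Matou\v{s}ek's book, where the sharp bound $\sum_{i=0}^{d}\binom{m}{i}$ is proved by essentially this same induction), so your proof is the standard argument, just with the exact count relaxed to the convenient form $m^d$.
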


We now establish the following recursive formula for $\ot_d(n)$.

\begin{lemma}
\label{hyperplane}
For $M = \ot_{d-1}(n-1)$ and $d \geq 2$,

$$\ot_d(n)  \leq 2^{4d^2M\log M}.$$

\end{lemma}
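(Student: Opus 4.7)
The plan is to locate, inside the $N$-point sequence $P$ (with $N = 2^{4d^2 M \log M}$), a subsequence $q_1, \ldots, q_{M+1}$ enjoying the following ``one-sidedness'' invariant: for every $d$-tuple $(q_{i_1}, \ldots, q_{i_d})$ with $i_1 < \cdots < i_d \le M$, the orientation of the $(d+1)$-tuple $(q_{i_1}, \ldots, q_{i_d}, q_j)$ is the same value for every index $j$ with $i_d < j \le M+1$. Once this has been arranged, I would treat $q_{M+1}$ as a projection center: central projection from $q_{M+1}$ onto a generic affine hyperplane yields an $M$-element sequence in $\mathbb{R}^{d-1}$ whose order type encodes exactly this invariant, and applying the inductive bound $\ot_{d-1}(n-1) = M$ will produce a size-$n$ order-type homogeneous subsequence in $\mathbb{R}^d$.

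To realize the invariant I would build the subsequence greedily, maintaining a ``surviving'' set $S_t \subseteq P$ with $S_0 = P$. At step $t$, choose $q_t$ to be the point of $S_{t-1}$ with smallest index in $P$. The new $d$-tuples that require control at this step are those of the form $(q_{i_1}, \ldots, q_{i_{d-1}}, q_t)$ with $i_1 < \cdots < i_{d-1} < t$; there are $\binom{t-1}{d-1} \le t^{d-1}$ of them, and the hyperplanes they span partition $\mathbb{R}^d$ into at most $t^{d(d-1)} \le t^{d^2}$ cells by the hyperplane arrangement lemma. Set $S_t$ to be the intersection of $S_{t-1} \setminus \{q_t\}$ with the most populated cell; this keeps $|S_t| \ge (|S_{t-1}|-1)/t^{d^2}$, and it preserves the invariant because the points of $S_t$ all lie on the same side of each new hyperplane, while every older $d$-tuple inherits its invariant from $S_t \subseteq S_{t-1}$. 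After $M$ iterations we have $|S_M| \ge N/(M!)^{d^2}$, which using $(M!)^{d^2} \le 2^{d^2 M \log M}$ leaves ample room to pick $q_{M+1} \in S_M$.

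For the projection step, I would fix a generic affine hyperplane $H$ avoiding $q_{M+1}$ and let $\bar{q}_i$ be the intersection of the line through $q_{M+1}$ and $q_i$ with $H$. A standard determinant calculation supplies a global sign $\sigma \in \{+1,-1\}$, depending only on $q_{M+1}$ and $H$, such that for every $d$-tuple the orientation of $(\bar{q}_{i_1}, \ldots, \bar{q}_{i_d})$ in $\mathbb{R}^{d-1}$ equals $\sigma$ times the orientation of $(q_{i_1}, \ldots, q_{i_d}, q_{M+1})$ in $\mathbb{R}^d$, which by the invariant coincides with the orientation of $(q_{i_1}, \ldots, q_{i_d}, q_j)$ for every $j > i_d$. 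A generic $H$ secures general position of the $\bar{q}_i$ in $\mathbb{R}^{d-1}$, so, since $M = \ot_{d-1}(n-1)$, there exist indices $j_1 < \cdots < j_{n-1}$ for which $\bar{q}_{j_1}, \ldots, \bar{q}_{j_{n-1}}$ is order-type homogeneous with some common orientation $\varepsilon$. The proposed homogeneous subsequence in $\mathbb{R}^d$ is then $(q_{j_1}, \ldots, q_{j_{n-1}}, q_{M+1})$: any $(d+1)$-tuple from these $n$ points has as its last entry either $q_{M+1}$ or some $q_{j_a}$ acting as the ``later point'' for the first $d$ entries, and in either case the invariant together with the projection identity forces the orientation to be $\sigma \varepsilon$.

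The main obstacle I anticipate is the geometric-combinatorial bridge in the projection step: carefully confirming that the order type of $\bar{q}_1, \ldots, \bar{q}_M$ in $\mathbb{R}^{d-1}$ really does match, up to the single global sign $\sigma$, the $2$-coloring of $d$-tuples dictated by the invariant, and that $H$ can be perturbed so that the $\bar{q}_i$ satisfy the paper's general-position convention in $\mathbb{R}^{d-1}$. The iterative extraction itself is a clean pigeonhole argument on the cells of a hyperplane arrangement, and the final bookkeeping $(M!)^{d^2} \le 2^{d^2 M \log M} \ll 2^{4d^2 M \log M}$ is routine.
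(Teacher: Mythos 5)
Your greedy extraction of the points $q_1,\ldots$ with the one-sidedness invariant is precisely the paper's construction, and the bookkeeping is fine; the gap is in the projection step. Writing $\bar q_i = q_{M+1} + t_i(q_i - q_{M+1})$ with $\bar q_i \in H$, the correct determinant identity is that the orientation of $(\bar q_{i_1},\ldots,\bar q_{i_d})$ inside $H\cong\mathbb{R}^{d-1}$ equals $\sigma_0\cdot\sgn(t_{i_1}t_{i_2}\cdots t_{i_d})$ times the orientation of $(q_{i_1},\ldots,q_{i_d},q_{M+1})$ in $\mathbb{R}^d$, where only $\sigma_0$ depends solely on $q_{M+1}$ and $H$. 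The sign of each $t_i$ is determined by which side of the hyperplane through $q_{M+1}$ parallel to $H$ the point $q_i$ lies on, so your claimed global sign $\sigma$ ``depending only on $q_{M+1}$ and $H$'' exists only if all the $q_i$ lie on one side, i.e.\ only if $H$ can be chosen to cross every ray emanating from $q_{M+1}$ through a point $q_i$. Nothing in the construction guarantees this: the invariant does not prevent $q_{M+1}$ from lying in the interior of the convex hull of $q_1,\ldots,q_M$ (already for $d=2$, a point inside a triangle is consistent with the invariant for the three vertices), and in that case no choice of $H$, generic or not, makes all $t_i$ of one sign. Then a homogeneous subset of the projections does not pull back to $(d+1)$-tuples $(q_{i_1},\ldots,q_{i_d},q_{M+1})$ of constant orientation, and the final step collapses.

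This is exactly where the paper spends its factor of two: it runs the extraction up to $2M$ points, pigeonholes to find a subset $Q$ of at least $M$ points lying entirely above or entirely below $q_{2M}$ in the last coordinate, and projects $Q$ from $q_{2M}$ onto a horizontal hyperplane separating $Q$ from $q_{2M}$; then every projection parameter is positive and the single global sign you want genuinely exists. Your argument can be repaired the same way, since the counting allows $2M$ steps: $((2M-1)!)^{d^2} \leq 2^{2d^2 M\log (2M)} \leq 2^{4d^2M\log M}$ for $M\geq 2$. But as written, with only $M+1$ points extracted, there is no room to restrict to one side and still retain $M = \ot_{d-1}(n-1)$ points for the inductive application.
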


\begin{proof}
Let $P =(p_1,...,p_N)$ be a sequence of $N = 2^{4d^2M\log M}$ points in $\mathbb{R}^d$ in general position, and let $\chi:{P\choose d+1}\rightarrow \{+1,-1\}$ be the order type of $P$.  In what follows, we will recursively construct a sequence of points $q_1,...,q_r$ from $P$ and a subset $S_r \subset P$, where $r = d-1,...,2M$, such that the following hold.

 \begin{enumerate}[label={(\arabic*)}]

 \item For $i<j$, $q_i$ comes before $q_j$ in the original ordering and every point in $S_r$ comes after $q_r$ in the original ordering.

 \item Every $d$-tuple $(q_{i_1},...,q_{i_{d}})\subset \{q_1,...,q_{r}\}$ with $i_1 < i_2 < \cdots <i_{d}$ has the property that either $\chi(q_{i_1},...,q_{i_{d}}, q)  =+1$ for every point $q \in \{q_j:i_{d} < j \leq r\} \cup S_r$, or $\chi(q_{i_1},...,q_{i_{d}}, q) = -1$ for every point $q \in \{q_j:i_{d} < j \leq r\} \cup S_r$.

     \item We have $|S_r| \geq \frac{N}{((r-1)!)^{d^2}}-r.$
 \end{enumerate}

We start by selecting the $d-1$ points $\{q_1,...,q_{d-1}\}  = \{p_1,...,p_{d-1}\}$ from $P$ and setting $S_{d-1} = P \setminus\{p_1,...,p_{d-1}\}$.  After obtaining $\{q_1,...,q_r\}$ and $S_r$, we define $q_{r+1}$ and $S_{r+1}$ as follows.  Let $q_{r+1}$ be the smallest element in $S_r$.

In order to obtain (2), we only need to consider the $d$-tuples from $\{q_1,...,q_{r+1}\}$ that include the last point $q_{r+1}$.  Notice that each $(d-1)$-tuple $T=\{q_{i_1},q_{i_2},...,q_{i_{d-1}}\} \subset \{q_1,...,q_r\}$ gives rise to a hyperplane spanned by the points $T\cup \{q_{r+1}\}$.  Let $H_r$ be the set of these ${r\choose d-1}$ hyperplanes.  By Lemma \ref{hyperplane}, the number of cells in the arrangement of $H_r$ is at most

$${r \choose d-1}^d \leq r^{d^2}.$$

\noindent By the pigeonhole principle and since $P$ is in general position, there exists a cell ($d$-face) $\Delta\subset \mathbb{R}^{d}$ that contains at least $(|S_r| - 1)/r^{d^2}$ points of $S_r$.  Hence, for any fixed $d$-tuple $(q_{i_1},...,q_{i_{d}}) \subset \{q_1,...,q_{r+1}\}$, we have either

$$\chi(q_{i_1},...,q_{i_{d}},p) = +1 \hspace{1cm}\forall p \in \Delta\cap S_r\setminus\{q_{r+1}\}$$

\noindent or

$$\chi(q_{i_1},...,q_{i_{d}},p) = -1 \hspace{1cm}\forall p \in \Delta\cap S_r\setminus\{q_{r+1}\}.$$

\noindent Set $S_{r+1} = \Delta\cap S_r\setminus\{q_{r+1}\}$.  Now (1) and (2) holds for $\{q_1,...,q_{r+1}\}$ and $S_{r+1}$.  In order to obtain (3), notice that we have the recursive formula

$$|S_{r+ 1}| \geq \frac{|S_r| -1 }{r^{d^2}}.$$

\noindent Substituting in the lower bound on $|S_r|$, we obtain the desired bound

$$|S_{r+ 1}| \geq \frac{N}{((r-1)!)^{d^2}r^{d^2}}- (r+1) = \frac{N}{(r!)^{d^2}}- (r+1).$$
This shows that we can construct the sequence $q_1,\ldots,q_{r+1}$ and the set $S_{r+1}$ with the three desired properties.  Since

$$|S_{2M}| \geq  \frac{2^{4d^2M\log M}}{((2M-1)!)^{d^2}}- 2M \geq 1,$$

\noindent this implies that the set $\{q_1,...,q_{2M}\}$ is well defined for $M = \ot_{d-1}(n-1)$.  By the pigeonhole principle, there exists a subset $Q \subset \{q_1,...,q_{2M}\}$ such that $|Q| \geq M = \ot_{d-1}(n-1)$, and $Q$ lies either above or below the point $q_{2M}$.  We will only consider the case when $Q$ lies below $q_{2M}$, since the other case is symmetric. We define the hyperplane $h = \{(x_1,...,x_d) \in \mathbb{R}^d: x_d = c\}$, where $c$ is a constant such that $h$ separates $Q$ and $q_{2M}$. For each point $q_i \in Q$, let $q_iq_{2M}$ be the line in $\mathbb{R}^d$ containing points $q_i$ and $q_{2M}$. Then we define the map $\phi:Q\rightarrow Q^{\ast}$, where $\phi(q_i) = q_iq_{2M}\cap h$.   With a slight perturbation of $Q$ if necessary, $Q^{\ast}$ is also in general position in $h = \mathbb{R}^{d-1}$, and the map $\phi$ is bijective.   See Figure \ref{project}.

\begin{figure}[h]
\begin{center}
\includegraphics[width=240pt]{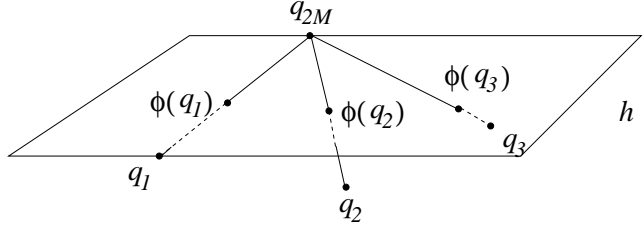}
  \caption{Mapping $\phi:Q\rightarrow Q^{\ast}$.}\label{project}
 \end{center}
\end{figure}

\noindent  By definition of $\ot_{d-1}(n-1)$, there exist $n-1$
points $\{q^{\ast}_{1},...,q^{\ast}_{n-1}\}\subset Q^{\ast}$ such that every $d$-tuple has the same orientation\footnote{With respect to the basis $e'_1 = \langle 1,0,...0,c\rangle, e'_2 = \langle 0,1,...0,c\rangle, \cdots , e'_{d-1}=\langle 0,0,...,1,c\rangle$} in $h = \mathbb{R}^{d-1}$.  This implies that $q_{2M}$ lies on the same side of each hyperplane spanned by $d$ points from $\{\phi^{-1}(q^{\ast}_{1}),\phi^{-1}(q^{\ast}_{2}),...,\phi^{-1}(q^{\ast}_{n-1})\}$ in $\mathbb{R}^{d}$. Hence every $(d+1)$-tuple of the form $(\phi^{-1}(q^{\ast}_{i_1}),...,\phi^{-1}(q^{\ast}_{i_d}),q_{2M})$, $1\leq i_1 < \cdots < i_d\leq n-1$, has the same orientation in $\mathbb{R}^d$.  By property (2), every $(d+1)$-tuple in the $n$-element set $\{\phi^{-1}(q^{\ast}_{1}),...,\phi^{-1}(q^{\ast}_{n-1}),q_{2M}\}\subset P$ has the same orientation.

\end{proof}

Theorem \ref{main1} now follows by applying Lemma \ref{hyperplane} with the fact that $\ot_2(n) = 2^{O(n)}$.

\section{Concluding remarks}

Let us remark that a simple modification to the construction of Eli\'a\v{s} and Matou\v{s}ek \cite{matousek} shows that $\ot_d(n) \geq 2^{2^{\Omega(n)}}$ for $d\geq 3$.  The best known estimates on $\ot_d(n)$ can be summarized in the following table.

\medskip

\medskip
\medskip
\begin{center}
    \renewcommand*\arraystretch{1.5}
 \begin{tabular}{|c|c|c|}
   \hline
   dimension $d$ & best results & references \\\hline
   $d = 1$  & $\ot_1(n) = (n-1)^2 + 1$  & Erd\H os and Szekeres \cite{es}\\
    $ d = 2$ &  $\ot_2(n) = 2^{\Theta(n)}$   &   Erd\H os and Szekeres \cite{es}  \\
   $d = 3$  & $\ot_3(n) = 2^{2^{\Theta(n)}}$  &  Eli\'a\v{s}-Matou\v{s}ek \cite{matousek} and Theorem \ref{main1}  \\
   $d\geq 4$ &  $2^{2^{\Omega(n)}} \leq \ot_d(n) \leq \twr_d(O(n))$  & Eli\'a\v{s}-Matou\v{s}ek \cite{matousek} and Theorem \ref{main1}  \\
   \hline
 \end{tabular}
 \end{center}

\medskip
\medskip
\medskip

  Hence, there is a significant gap between the known upper and lower bounds on $\ot_d(n)$ for $d\geq 4$.  We believe that $\ot_d(n)$ is on the order of $\twr_{d}(\Theta(n))$.

\end{document}